\documentclass[12pt]{article}
\usepackage{times}
\usepackage{hyperref}

\usepackage{graphicx}%

\usepackage{latexsym}
\usepackage{amsmath}
\usepackage[left=1in,top=1in,right=1in]{geometry}
\usepackage{amssymb,amsmath}
\usepackage{amsthm}
\usepackage{amsmath}
\usepackage{amsfonts}
\usepackage{titlesec}
\titlelabel{\thetitle.\quad}

\providecommand{\U}[1]{\protect\rule{.1in}{.1in}}
\newtheorem{theorem}{Theorem}

\newtheorem{definition}[theorem]{Definition}
\newtheorem{example}[theorem]{Example}

\title{\textbf{ {\Large  Interpolation of surfaces with asymptotic curves in Euclidean 3-space} }}
\author{\textbf{Mustafa Alt\i n} \\Technical Sciences Vocational School,Bing\"ol University, Turkey\\ maltin@bingol.edu.tr\\ \\ \textbf{\.Inan \"Unal}\\ Department of Computer Engineering\\Munzur University, Tunceli, Turkey\\ inanunal@munzur.edu.tr\\\\
\textbf{Fatemah Mofarreh} \\Mathematical Science Department \\Faculty of Science Princess Nourah bint Abdulrahman University\\ Riyadh 11546 Saudi Arabia.\\ fyalmofarrah@pnu.edu.sa\\ }

\date{ }
\begin{document}
\maketitle

\begin{abstract}
	In this paper, we investigate the interpolation of surfaces which are obtained from an isoasymptotic curve in 3D-Euclidean space. We prove that there exist a unique $ C^0 $-Hermite surface interpolation related to an isoasymptotic curve  under some special conditions on the marching scale functions. Finally, we present some examples and plot their graphs. 
\end{abstract}

\section{Introduction}
Differential geometry is a branch of mathematics which uses advanced calculus tools in geometry. In recent years, it has become an applicable area of mathematics in science and technology. Since the manifold theory has used in general relativity in the 1900s by Einstein, differential geometry of curves, surfaces and general manifolds have been improving more. Notably, from medicine to social science and from artificial intelligence to economy, it is very clear how the differential geometry is applied. In this manner, one can consider that the applied mathematics has been changed from numerical and computational methods to differential geometrical tools. For example, to understand the meaning of multiple features data we use calculus on manifolds in machine learning \cite{bronstein2017geometric}. Also, differential geometry permit us to work on non-euclidean spaces as most real life problems are define in such spaces. So, it is a fundamental tool for understanding events in the universe. \par 
A relevant kind of curves is geodesics which play the role of straight lines in Euclidean space on a manifold. Gauss proved that the differential geometry of a surface is different from the geometry of ambient space. The well known example of supporting these ideas is that a geodesic on a unit sphere embedded in a Euclidean space is not a geodesic in a Euclidean space. In this way, the differential geometry of a surface has many significant properties that we can use in applied sciences. The minimal distance between two points on a surface is called a geodesic and this is considered as an important idea in many applications \cite{bremond1994estimation,haw1985application}. 
\par
 A surface could be constructed using a geodesic.  In \cite{6} a general surface is obtained from a polynomial geodesic. Also, considering a 3-dimensional polynomial curve which is a pregeodesic, the authors constructed ruled cubic patched in \cite{5}. In addition, in \cite{4}  authors investigated a developable surface that contains a given Bezier geodesic. Wang at al. defined a parametric surface which is called  surface family, using a geodesic curve \cite{wang2004parametric}.  They used the Frenet frame of the curve and presented necessary conditions in which the curve is an isogeodesic on a parametric surface considering Frenet aparatus of the curve. The methods in this paper is a reverse of some engineering problems. Later, Kasap et al. \cite{kasap2008generalization} generalized their methods and presented examples. Li et al. studied the approximation minimal surface with geodesics by using the Dirichlet function and they minimized the area of a surface family by using Dirichlet approach. This method can be used for obtaining the  minimal cost of the material while building surfaces.  The family surfaces have been studied for example as  in \cite{kasap2006surfaces,kasap2008generalization,csaffak2009family,atalay2015surfaces,yuzbacsi2016construction,yuzbasi2016family}. \par 
The other special curve which is as important as geodesics is the asymptotic curves.  An asymptotic curve is a curve always tangent to an asymptotic direction of the surface and it has zero normal curvature. On the other hand, one can be constructed a surface using an asymptotic curve.  Saad et al. \cite{saad2019minimal} approximated the minimal parametric surface with an asymptotic curve by minimizing the Dirichlet function. In \cite{liu2013designing} they examined rational developable surface pencils through an arbitrary parametric curve as its common asymptotic curve. Moreover,  G\"uler at al. constructed  a surface interpolating a given curve as the asymptotic curve of it \cite{guler2019asym}. Similar to   geodesics, the asymptotic curves also have many applications in related sciences.  In \cite{schling2018designing} the authors presented a method to design strained grid structures along asymptotic curves to benefit from a high degree of simplification in fabrication and construction.  Also, asymptotic curves has many applications in astronomy as in \cite{contopoulos1990asymptotic,efthymiopoulos1999cantori}. 
 \par 
Lee et al. \cite{leeinterpolation2020} introduced a new method to construct a parametric surface in terms of curves. They defined a surface interpolation associated with a spatial curve passing through some $ m$-points in Euclidean $ 3 $-space. Motivating all studies mentioned from here, we consider a surface interpolation using asymptotic  curves in Euclidean space. We organized this paper as follow: We give some fundamental facts which are used throughout the paper, in Section 2. Section 3 is devoted to the surface interpolations with isoparametric curve and examples with their graphs. 
\section{Preliminaries}
In this section, we give a short review on curves and surfaces in an 3D-Euclidean space. For details  we refer the reader to any classical differential geometry books, for example \cite{struik1961lectures}.\par 
Let  $\gamma(\omega)$ be a curve which is arc-length $\omega$ in 3D Euclidean space.  Take the Frenet frame of  $\gamma(\omega)$ by $\left\{  \mathcal{V}_1(\omega),\mathcal{V}_2(\omega),\mathcal{V}_3(\omega)\right\}  $. Then we have the following well known relations between $\kappa(\omega)$ and $\tau(\omega)$ are the curvature and the torsion of the curve
$\gamma(\omega)$, respectively.

\[
\left[
\begin{array}
[c]{c}%
{\mathcal{V}_1}^{^{\prime}}(\omega)\\
{\mathcal{V}_2}{^{\prime}}(\omega)\\
{\mathcal{V}_3}^{^{\prime}}(\omega)
\end{array}
\right]  =\left[
\begin{array}
[c]{ccc}%
0 & \kappa(\omega) & 0\\
-\kappa(\omega) & 0 & \tau(\omega)\\
0 & -\tau(\omega) & 0
\end{array}
\right]  .\left[
\begin{array}
[c]{c}%
\mathcal{V}_1(\omega)\\
\mathcal{V}_2(\omega)\\
\mathcal{V}_3(\omega)
\end{array}
\right]
\].

Previous equations are called the Frenet apparatus of a curve and they are important  to understand the geometry of the curve. Also, we can classify curves via the Frenet–Serret frames. In \cite{wang2004parametric} Wang et al. defined pencil surface which could be obtained using the Frenet–Serret frames  of the curve. This surface is called by a surface family or a pencil surface and it is defined as follows. 

\begin{definition}
	Let  $\gamma(\omega)$ be a curve which is arc-length $\omega$ in $ \mathbb{E}^3 $ and $\left\{  \mathcal{V}_1(\omega),\mathcal{V}_2(\omega),\mathcal{V}_3(\omega)\right\}$ be the Frenet frame of $ \gamma $. Then the map 
	\begin{equation}
	\Psi(\omega,\eta)=\gamma(\omega)+u(\omega,\eta)\cdot\mathcal{V}_1(\omega)+v(\omega,\eta)\cdot\mathcal{V}_2(\omega)+z(\omega,\eta)\cdot\mathcal{V}_3(\omega),\label{ana}%
	\end{equation}
	is defined a surface in $ \mathbb{E}^3 $, where  $ \Omega\geq \omega\geq0, \Lambda\geq \eta\geq0 $ for a real-valued constants $\Omega  , \Lambda $, and \ $u(\omega,\eta),v(\omega,\eta)$ and $z(\omega,\eta)$ are $C^{1}-$functions. The surface $ 	\Psi(\omega,\eta) $ is called as surface family or pencil surface \cite{wang2004parametric}. 
\end{definition}  
By the following definition we classify some special curves on a parametric surface $\Psi(\omega,\eta)$.
\begin{definition}
	Lets take a curve $\gamma(s)$ on a parametric surface $X(s,\eta)$. Then we have following characterizations  \cite{struik1961lectures}:
	\begin{enumerate}
			\item [$\bullet$] $\gamma(s)$ is said to be an isoparametric curve on $X(s,\eta)$ if there exists a parameter $\eta_{0}%
		\in[0,\Lambda]$ such that $X(s,\eta_{0})=\gamma(s).$
		\item [$\bullet$] $\gamma(s)$ is an asymptotic curve on a parametric surface $X(s,\eta)$
		if $\frac{\partial \mathcal{N}(s,\eta_{0})}{\partial s}.\mathcal{V}_1(s)=0,$ where $\mathcal{N}(s,\eta)$
		is the normal vector of surface $X(s,\eta)$, $\mathcal{V}_1(s)$ 	is the tangent vector of curve $\gamma(s)$.
		\item [$\bullet$] $\gamma(s)$ 	is called isoasymptotic of the surface $X(s,\eta)$ if it is both a asymptotic curve and  an
		isoparametric curve  on the surface $X(s,\eta)$.
	\end{enumerate}

\end{definition}
With following theorem we have the necessary and sufficient  conditions for $ \gamma $  to be isoasymptotic curve on the surface $\Psi(\omega,\eta)$  (\ref{ana}).

\begin{theorem}
\cite{bayram2012parametric} Let $\Psi(\omega,\eta)$ be a parametric surface with the marching-scale functions;  
\begin{align*}
u(\omega,\eta)  & =k(\omega)U(\eta),\\
v(\omega,\eta)  & =m(\omega)V(\eta),\\
z(\omega,\eta)  & =n(\omega)Z(\eta), 
\end{align*}
where $k(\omega), U(\eta), m(\omega), V(\eta), n(\omega)$ and $Z(\eta)$ are $C^{1}-$functions.
Then $\gamma(\omega)$ is an 
isoasymptotic curve on a parametric surface $\Psi(\omega,\eta)$ (\ref{ana}) if and only if we have 
\begin{equation}
\left\{
\begin{array}
[c]{c}%
U(\eta_{0})=V(\eta_{0})=Z(\eta_{0})=0,\\
n(\omega)=0\text{ or }\frac{\partial Z(\eta_{0})}{\partial \eta}=0.
\end{array}
\right.  \label{sart}
\end{equation}

\end{theorem}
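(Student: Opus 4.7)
The plan is to handle the two parts of the isoasymptotic condition separately and then combine them. First I would use the isoparametric requirement $\Psi(\omega,\eta_0)=\gamma(\omega)$: plugging into (\ref{ana}) forces $k(\omega)U(\eta_0)=m(\omega)V(\eta_0)=n(\omega)Z(\eta_0)=0$ for every $\omega$, and since we want these to hold independently of the particular functions $k,m,n$ determining a genuine pencil, the clean condition to extract is $U(\eta_0)=V(\eta_0)=Z(\eta_0)=0$. This takes care of the first line of (\ref{sart}).

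Next I would compute the two partial derivatives of $\Psi$ and apply the Frenet–Serret formulas $\mathcal{V}_1'=\kappa\mathcal{V}_2$, $\mathcal{V}_2'=-\kappa\mathcal{V}_1+\tau\mathcal{V}_3$, $\mathcal{V}_3'=-\tau\mathcal{V}_2$ to get
\begin{align*}
\Psi_\omega &= (1+u_\omega-v\kappa)\mathcal{V}_1+(u\kappa+v_\omega-z\tau)\mathcal{V}_2+(v\tau+z_\omega)\mathcal{V}_3,\\
\Psi_\eta &= u_\eta\mathcal{V}_1+v_\eta\mathcal{V}_2+z_\eta\mathcal{V}_3.
\end{align*}
Evaluating at $\eta=\eta_0$ and using the first-line conditions just obtained (which kill $u,v,z$ and also $u_\omega,v_\omega,z_\omega$), these collapse to $\Psi_\omega|_{\eta_0}=\mathcal{V}_1$ and $\Psi_\eta|_{\eta_0}=k(\omega)U'(\eta_0)\mathcal{V}_1+m(\omega)V'(\eta_0)\mathcal{V}_2+n(\omega)Z'(\eta_0)\mathcal{V}_3$, so the surface normal along $\gamma$ is $\mathcal{N}(\omega,\eta_0)=m(\omega)V'(\eta_0)\mathcal{V}_3-n(\omega)Z'(\eta_0)\mathcal{V}_2$.

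Now I would differentiate this expression for $\mathcal{N}(\omega,\eta_0)$ in $\omega$, substituting the Frenet formulas for $\mathcal{V}_2',\mathcal{V}_3'$; the only term contributing to the $\mathcal{V}_1$ direction comes from $-n(\omega)Z'(\eta_0)\mathcal{V}_2'$, which produces $n(\omega)Z'(\eta_0)\kappa(\omega)\mathcal{V}_1$. Hence
\begin{equation*}
\frac{\partial\mathcal{N}}{\partial\omega}(\omega,\eta_0)\cdot\mathcal{V}_1(\omega)=n(\omega)\,Z'(\eta_0)\,\kappa(\omega).
\end{equation*}
Since a Frenet frame presumes $\kappa(\omega)\neq0$, this expression vanishes identically in $\omega$ if and only if $n(\omega)\equiv0$ or $Z'(\eta_0)=\partial Z(\eta_0)/\partial\eta=0$, which is exactly the second line of (\ref{sart}). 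Both implications of the ``if and only if'' fall out of this single identity, so no separate converse calculation is needed.

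The main obstacle is bookkeeping rather than any conceptual difficulty: one must carefully track which terms vanish when restricting to $\eta=\eta_0$ (products like $u\kappa$, $v\tau$, $z_\omega=n'(\omega)Z(\eta_0)$ all drop out), and then differentiate the restricted normal rather than the full $\mathcal{N}(\omega,\eta)$, which is legitimate because partial differentiation in $\omega$ commutes with evaluating at the fixed parameter $\eta_0$.
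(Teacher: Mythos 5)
The paper does not actually prove this theorem --- it imports it from the cited reference \cite{bayram2012parametric} --- so there is no internal proof to compare against; your reconstruction is the standard argument from that source and it is correct: the isoparametric condition yields the first line of (\ref{sart}), the cross product $\Psi_\omega\times\Psi_\eta$ restricted to $\eta=\eta_0$ collapses to $m(\omega)V'(\eta_0)\mathcal{V}_3-n(\omega)Z'(\eta_0)\mathcal{V}_2$, and differentiating and dotting with $\mathcal{V}_1$ gives $n(\omega)Z'(\eta_0)\kappa(\omega)$, whose vanishing (with $\kappa\neq0$) is exactly the second line. The only soft spot, which you already flag, is that $k(\omega)U(\eta_0)=0$ for all $\omega$ forces $U(\eta_0)=0$ only when $k\not\equiv0$ (and similarly for $m,n$); this tacit nondegeneracy assumption is present in the original statement as well, and your treatment is consistent with it.
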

If we take  $k(\omega)=m(\omega)=n(\omega)=1$ ,particularly  and consider $U(\eta)$, $V(\eta)$ and $Z(\eta)$
as polynomials of the forms in (\ref{sart}), then we have 
\begin{align}
U(\eta)  & =\sum\limits_{t=1}^{n}a_{t}(\eta-\eta_{0})^{t},\label{fonksiyon}\\
V(\eta)  & =\sum\limits_{t=1}^{n}b_{t}(\eta-\eta_{0})^{t},\nonumber\\
Z(\eta)  & =\sum\limits_{t=1}^{n}c_{t}(\eta-\eta_{0})^{t},\text{ \ }c_{1}=0,\nonumber
\end{align}
respectively, where $a_{t},b_{t},c_{t}$ are constant. Then the polynomials
$U(\eta)$, $V(\eta)$ and $Z(\eta)$ in (\ref{fonksiyon}) satisfy the isoasymptotic
condition (\ref{sart}). Thus, we can determine marching-scale functions of a surface family $\Psi(\omega,\eta)$  by the polynomial expressions. 

\section{Surface Interpolations with Isoasymptotic Curve}

In this section, we construct a surface with isogeodesic curve passing
through finite control points lying on $\mathbb{E}^{3}.$

Now, we give a definition for surface interpolations with isoasymptotic curve
passing through some control points on  $\mathbb{E}^{3}.$

\begin{definition}
Let $A_{1},A_{2},...,A_{m}$ be different points on $\mathbb{E}^{3}$ and
$\Psi(\omega,\eta):B\subset\mathbb{R}^{2}\rightarrow\mathbb{E}^{3}$ be a parametric pencil
surface given by (\ref{ana}). For some different points $(\omega_{t},\eta_{t})\in B$
$(t=1,...,m),$ we can construct the surface $\Psi(\omega,\eta)$ such that $\Psi
(\omega_{t},\eta_{t})=A_{t}$. It is called a surface interpolation associated with the
given isoasymptotic curve $\gamma(\omega)$ passing through $m$-control points
$A_{t}~(t=1,...,m)$, simply, $C^{0}-$Hermite surface interpolation with an
isoasymptotic curve.  In particular, $\left\{  A_{1},A_{2},...,A_{m}\right\}
$ is called $C^{0}$-Hermite data.
\end{definition}

Polynomials $U(\eta)$, $V(\eta)$ and $Z(\eta)$ with degree $n$ in (\ref{fonksiyon})
have $n,n$ and $n-1$ degrees of freedom in terms of coeffcients $a_{t},b_{t}$
and $c_{t}$ respectively. In this case, there are two extra degrees of
freedom. To determine a unique parametric surface with isoasymptotic curve, we
may assume $a_{n}=b_{n}=0$.

Now, we consider an isoasymptotic curve on surface parametrization%
\begin{equation}
\Psi(\omega,\eta)=\gamma(\omega)+U(\eta)\cdot\mathcal{V}_1(\omega)+V(\eta)\cdot\mathcal{V}_2(\omega)+Z(\eta)\cdot\mathcal{V}_3(\omega),\label{yeniY}%
\end{equation}%
\[
\Omega\geq \omega\geq0,\text{ \ \ \ \ \ \ }\Lambda\geq \eta\geq0,\text{\ \ }%
\]
with the marching-scale functions are given in (\ref{fonksiyon}) for $a_{n}=b_{n}=0.$ 

\begin{theorem}
Let $A_{1},A_{2},...,A_{m}$ be different points on a parametric surface
$\Psi(\omega,\eta)$ given in (\ref{yeniY}). For $\Psi(\omega_{t},\eta_{t})=A_{t},t=1,...,m$
there exists a unique $C^{0}$-Hermite surface interpolation with an
isoasymptotic curve such that the marching-scale functions are given by%
\begin{align*}
U(\eta)   =\sum\limits_{t=1}^{n-1}a_{t}(\eta-\eta_{0})^{t},\ \ 
V(\eta)   =\sum\limits_{t=1}^{n-1}b_{t}(\eta-\eta_{0})^{t}\ \ \text{and }
Z(\eta)  =\sum\limits_{t=2}^{n}c_{t}(\eta-\eta_{0})^{t},
\end{align*}
and 
\begin{align*}
det\left(
\begin{array}
[c]{ccccc}  
d_{12}&\eta_{1}-\eta_{0} & (\eta_{1}-\eta_{0})^{2} & \cdots & (\eta_{1}-\eta_{0})^{n-1}\\
d_{22}&\eta_{2}-\eta_{0} & (\eta_{2}-\eta_{0})^{2} & \cdots & (\eta_{2}-\eta_{0})^{n-1}\\
\vdots\vdots & \vdots & \ddots & \vdots\\
d_{n2}&\eta_{n-1}-\eta_{0} & (\eta_{n-1}-\eta_{0})^{2} & \cdots & (\eta_{n-1}-\eta_{0})^{n-1}%
\end{array}
\right)  
\end{align*}
where $a_{t},b_{t}$ and $c_{t}$ are constant and $ d_{t2}=V(\eta_{t2}), \ t=1,2, \ldots , n$.
\end{theorem}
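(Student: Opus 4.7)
My plan is to reduce the interpolation conditions $\Psi(\omega_t,\eta_t)=A_t$ for $t=1,\dots,m$ to three \emph{decoupled} linear systems in the coefficient vectors $(a_1,\dots,a_{n-1})$, $(b_1,\dots,b_{n-1})$, $(c_2,\dots,c_n)$, and then invoke the classical Vandermonde determinant to obtain existence and uniqueness.

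First I would substitute the parametrization (\ref{yeniY}) into each interpolation equation and rewrite it as
\[
A_t-\gamma(\omega_t)=U(\eta_t)\,\mathcal{V}_1(\omega_t)+V(\eta_t)\,\mathcal{V}_2(\omega_t)+Z(\eta_t)\,\mathcal{V}_3(\omega_t).
\]
Because $\{\mathcal{V}_1(\omega_t),\mathcal{V}_2(\omega_t),\mathcal{V}_3(\omega_t)\}$ is an orthonormal basis of $\mathbb{E}^{3}$, taking inner products with the three Frenet vectors at $\omega_{t}$ yields the scalar conditions
\[
U(\eta_t)=d_{t1},\qquad V(\eta_t)=d_{t2},\qquad Z(\eta_t)=d_{t3},
\]
where $d_{tj}:=\langle A_t-\gamma(\omega_t),\,\mathcal{V}_j(\omega_t)\rangle$ is known from the $C^{0}$-Hermite data $\{A_1,\dots,A_m\}$. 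This decoupling is the conceptual heart of the argument.

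Next I would do the degree-of-freedom count. The assumptions $a_n=b_n=0$ and $c_1=0$ leave $n-1$ free coefficients in each of $U$, $V$, $Z$, for $3(n-1)$ total unknowns, matching the $3m$ scalar equations precisely when $m=n-1$. For the $U$-system, the coefficient matrix has entries $(\eta_t-\eta_0)^{s}$, $s=1,\dots,n-1$; factoring $(\eta_t-\eta_0)$ out of each row exposes the standard Vandermonde matrix in the nodes $\eta_1,\dots,\eta_{n-1}$, whose determinant equals
\[
\prod_{t=1}^{n-1}(\eta_t-\eta_0)\,\prod_{1\le s<t\le n-1}(\eta_t-\eta_s).
\]
The $V$-system has the identical coefficient matrix, while the $Z$-system (whose summation starts at $s=2$) differs only in producing an additional factor $(\eta_t-\eta_0)$ per row. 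Provided the interpolation parameters $\eta_1,\dots,\eta_{n-1}$ are pairwise distinct and different from $\eta_0$, all three matrices are therefore nonsingular, and the coefficients $a_t,b_t,c_t$ are uniquely determined; consequently the surface (\ref{yeniY}) exists and is unique. Writing out Cramer's rule for the $b_t$'s in the $V$-system then reproduces the determinant displayed in the theorem.

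The main obstacle I anticipate is not computational but conceptual: one must recognise that the Frenet-frame orthonormality at each $\omega_t$ is exactly what separates a single vector equation in $\mathbb{E}^{3}$ into three independent scalar equations, and that the prescribed polynomial shapes (lowest degree $1$ in $U,V$ versus $2$ in $Z$) still yield essentially the same Vandermonde structure after pulling out common row factors. Once this observation is made, invertibility and the Cramer-type explicit formulas follow by routine linear algebra.
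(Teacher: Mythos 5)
Your proposal is correct and follows essentially the same route as the paper: project $A_t-\gamma(\omega_t)$ onto the orthonormal Frenet frame to decouple the vector interpolation conditions into three scalar Vandermonde-type linear systems, then compute the determinants by factoring out the row factors $(\eta_t-\eta_0)$ to conclude nonsingularity and hence unique solvability for the $a_t$, $b_t$, $c_t$. Your explicit degree-of-freedom count ($m=n-1$) and the remark about Cramer's rule are minor elaborations of the same argument.
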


\begin{proof}
	Let us define $(n-1)$-points of the surface $ \Psi(\omega_{t},\eta_{t}) $ by 
\[
\Psi(\omega_{t},\eta_{t})=A_{t}\text{ for }\omega\geq \eta_{n-1}...\geq \eta_{1}\geq \eta_{0}%
\geq0.\text{ }%
\]
So, we have  $\Psi(\omega_{t},\eta_{t})=A_{t}=\gamma(\omega_{t})+\mathcal{V}_1(\omega_{t}).U(\eta_{t})+\mathcal{V}_2(\omega_{t}%
).V(\eta_{t})+\mathcal{V}_3(\omega_{t}).Z(\eta_{t})$. By taking inner product with  $\mathcal{V}_1(\omega)$, $\mathcal{V}_2(\omega)$, and
$\mathcal{V}_3(\omega)$, respectively we obtain the coefficients as following 
\begin{align*}
U(\eta_{t})  & =\left\langle A_{t}-\gamma(\omega_{t}),\mathcal{V}_1(\omega_{t})\right\rangle ,\\
V(\eta_{t})  & =\left\langle A_{t}-\gamma(\omega_{t}),\mathcal{V}_2(\omega_{t})\right\rangle ,\\
Z(\eta_{t})  & =\left\langle A_{t}-\gamma(\omega_{t}),\mathcal{V}_3(\omega_{t})\right\rangle .
\end{align*}
Using 
\[
U(\eta_{t})=d_{t1},\text{ }V(\eta_{t})=d_{t2},\text{ }Z(\eta_{t})=d_{t3}%
\]
where $d_{t1}$, $d_{t2}$ and $d_{t3}$ are constant, from (\ref{fonksiyon}) for
$a_{n}=b_{n}=0,$ we have following matrices; 
\[
\left(
\begin{array}
[c]{cccc}%
\eta_{1}-\eta_{0} & (\eta_{1}-\eta_{0})^{2} & \cdots & (\eta_{1}-\eta_{0})^{n-1}\\
\eta_{2}-\eta_{0} & (\eta_{2}-\eta_{0})^{2} & \cdots & (\eta_{2}-\eta_{0})^{n-1}\\
\vdots & \vdots & \ddots & \vdots\\
\eta_{n-1}-\eta_{0} & (\eta_{n-1}-\eta_{0})^{2} & \cdots & (\eta_{n-1}-\eta_{0})^{n-1}%
\end{array}
\right)  \left(
\begin{array}
[c]{c}%
a_{1}\\
a_{2}\\
\vdots\\
a_{n-1}%
\end{array}
\right)  =\left(
\begin{array}
[c]{c}%
d_{11}\\
d_{21}\\
\vdots\\
d_{(n-1)1}%
\end{array}
\right)  ,
\]%
\[
\left(
\begin{array}
[c]{cccc}%
\eta_{1}-\eta_{0} & (\eta_{1}-\eta_{0})^{2} & \cdots & (\eta_{1}-\eta_{0})^{n-1}\\
\eta_{2}-\eta_{0} & (\eta_{2}-\eta_{0})^{2} & \cdots & (\eta_{2}-\eta_{0})^{n-1}\\
\vdots & \vdots & \ddots & \vdots\\
\eta_{n-1}-\eta_{0} & (\eta_{n-1}-\eta_{0})^{2} & \cdots & (\eta_{n-1}-\eta_{0})^{n-1}%
\end{array}
\right)  \left(
\begin{array}
[c]{c}%
b_{1}\\
b_{2}\\
\vdots\\
b_{n-1}%
\end{array}
\right)  =\left(
\begin{array}
[c]{c}%
d_{12}\\
d_{22}\\
\vdots\\
d_{(n-1)2}%
\end{array}
\right)  ,
\]%
\[
\left(
\begin{array}
[c]{cccc}%
(\eta_{1}-\eta_{0})^{2} & (\eta_{1}-\eta_{0})^{3} & \cdots & (\eta_{1}-\eta_{0})^{n}\\
(\eta_{2}-\eta_{0})^{2} & (\eta_{2}-\eta_{0})^{3} & \cdots & (\eta_{2}-\eta_{0})^{n}\\
\vdots & \vdots & \ddots & \vdots\\
(\eta_{n-1}-\eta_{0})^{2} & (\eta_{n-1}-\eta_{0})^{3} & \cdots & (\eta_{n-1}-\eta_{0})^{n}%
\end{array}
\right)  \left(
\begin{array}
[c]{c}%
c_{2}\\
c_{3}\\
\vdots\\
c_{n}%
\end{array}
\right)  =\left(
\begin{array}
[c]{c}%
d_{13}\\
d_{23}\\
\vdots\\
d_{(n-1)3}%
\end{array}
\right)  ,
\]
for $ 1\leq \eta \leq n-1$.  
Lets take 
\begin{align*}
M_{1}  & =\left(
\begin{array}
[c]{cccc}%
\eta_{1}-\eta_{0} & (\eta_{1}-\eta_{0})^{2} & \cdots & (\eta_{1}-\eta_{0})^{n-1}\\
\eta_{2}-\eta_{0} & (\eta_{2}-\eta_{0})^{2} & \cdots & (\eta_{2}-\eta_{0})^{n-1}\\
\vdots & \vdots & \ddots & \vdots\\
\eta_{n-1}-\eta_{0} & (\eta_{n-1}-\eta_{0})^{2} & \cdots & (\eta_{n-1}-\eta_{0})^{n-1}%
\end{array}
\right)  
\end{align*}
and
\begin{align*}
M_{2}  & =\left(
\begin{array}
[c]{cccc}%
(\eta_{1}-\eta_{0})^{2} & (\eta_{1}-\eta_{0})^{3} & \cdots & (\eta_{1}-\eta_{0})^{n}\\
(\eta_{2}-\eta_{0})^{2} & (\eta_{2}-\eta_{0})^{3} & \cdots & (\eta_{2}-\eta_{0})^{n}\\
\vdots & \vdots & \ddots & \vdots\\
(\eta_{n-1}-\eta_{0})^{2} & (\eta_{n-1}-\eta_{0})^{3} & \cdots & (\eta_{n-1}-\eta_{0})^{n}%
\end{array}
\right).
\end{align*}
Then the determinants of $ M_1 $ and $ M_2 $ is obtained as  
\begin{align*}
\det(M_{1})  & =(-1)^{\frac{(n-1)(n-2)}{2}}\prod\limits_{t=1}^{n-1}%
(\eta_{t}-\eta_{0})\prod\limits_{1\leq t<j\leq n-1}(\eta_{t}-\eta_{j}),
\end{align*}
and 
\begin{align*}
\det(M_{2})   =(-1)^{\frac{(n-1)(n-2)}{2}}\prod\limits_{t=1}^{n-1}%
(\eta_{t}-\eta_{0})^{2}\prod\limits_{1\leq t<j\leq n-1}(\eta_{t}-\eta_{j}). 
\end{align*}
Since $\eta_{t}$ and $\eta_{j}$ are non-zero and different from each others, for $1\leq
t<j\leq n-1$, we get $\det(M_{1})\neq0$ and $\det(M_{1})\neq0$, that is,
$a_{1},a_{2}$, ...,$a_{n-1},$ $b_{1},b_{2}$, ...,$b_{n-1}$ and $c_{2},c_{3}$,
...,$c_{n}$ have unique solutions. This gives us that there exists a uniquely $C^{0}%
$-Hermite surface interpolation with an isoasymptotic curve.
\end{proof}
\begin{example}
	Consider a curve parametrized by%
	\begin{equation}
	\gamma(\omega)=(\frac{\sin \omega}{2},\frac{\cos \omega}{2},\frac{\sqrt{3}\omega}{2}),\text{
		\ \ \ \ \ }0\leq \omega\leq2\pi. \label{xam1egri}%
	\end{equation}
	The curve (\ref{xam1egri}) is shown in Figure 1. By a direct computation,
	we have%
	\begin{align*}
	\mathcal{V}_1(\omega)  &  =(\frac{\cos \omega}{2},-\frac{\sin \omega}{2},\frac{\sqrt{3}}{2}),\\
	\mathcal{V}_2(\omega)  &  =(-\sin \omega,-\cos \omega,0),\\
	\mathcal{V}_3(\omega)  &  =(\frac{\sqrt{3}\cos \omega}{2},-\frac{\sqrt{3}\sin \omega}{2},\frac{-1}{2})
	\end{align*}
	For $A_{1}=(1,7,\frac{\sqrt{3}\pi}{4})$, the point $A_{1}$ lies on the surface
	pencil with an isoasymptotic curve given by (\ref{yeniY}). If we take%
	\[
	U(\eta)=a_{1}\eta,\text{ }V(\eta)=b_{1}\eta,\text{ }W(\eta)=c_{2}\eta^{2}%
	\]
	then there is only one surface with an isoasymptotic curve passing the point
	$A_{1}.$ We take $\omega_{1}=\frac{\pi}{2},$ $\eta_{1}=2,$ i.e., $\Psi_1(\frac{\pi}%
	{2},2)=A_{1}(1,7,\frac{\sqrt{3}\pi}{4})$ We obtain the equations:%
	\begin{align*}
	\frac{1}{2}-2b_{1}  &  =1,\\
	-a_{1}-2\sqrt{3}c_{2}  &  =7,\\
	\sqrt{3}a_{1}-c_{2}  &  =0,
	\end{align*}
	which imply%
	\[
	a_{1}=-1,\text{ }b_{1}=-\frac{1}{2},\text{ }c_{2}=-\sqrt{3}.
	\]
	Thus, we can construct the surface with an isoasymptotic curve passing the one
	point $A_{1}=(1,7,\frac{\sqrt{3}\pi}{4})$ given by%
	\begin{align}
	\Psi_1(\omega,\eta)  &  =\frac{1}{2}(\sin(\omega)-3\eta^{2}\cos(\omega)+\eta\sin(\omega)-\eta\cos
	(\omega),\label{examyuz}\\
	&  (\eta^{2}+\eta)\sin(\omega)+(\eta+1)\cos(\omega), \sqrt{3}(\omega+\eta^{2}-\eta))\nonumber
	\end{align}
	The surface (\ref{examyuz}) is shown in Figure 2 and \ the surface
	(\ref{examyuz}) with curve (\ref{xam1egri}) is shown in Figure 3.
\end{example}

%

\begin{figure}[h]
	\begin{center}
		\includegraphics[
		height=1.1in,
		width=3in
		]%
		{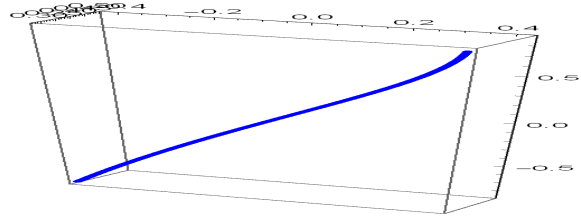}%
	\end{center}

\caption{Graph of $ \gamma $}
\end{figure}
\begin{figure}[h]
	\begin{center}
		\includegraphics[
		height=1.4in,
		width=3in
		]		{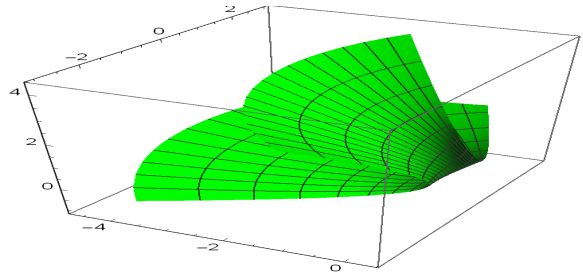}
	\end{center}
\caption{Graph of surface $ \Psi_1 $ is cunstructued by $ \gamma $}
\end{figure}\begin{figure}[h]
	\begin{center}
		\includegraphics[
		height=1.7in,
		width=3in
		]		{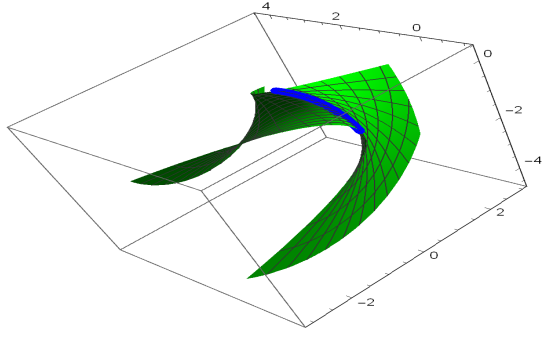}
	\end{center}
\caption{Graph of surface $ \Psi_1$ with curve $ \gamma $}
\end{figure}

\begin{example}
	Let the surface with an isoasymptotic curve above Example pass through the
	additional point $A_{2}=(\frac{1}{2},0,\frac{\sqrt{3}\pi}{4})$. For the
	convenience of calculations, taking $\omega_{2}=\frac{\pi}{2},$ $\eta_{2}=1,$ we have
	$A_{2}(\frac{1}{2},0,\frac{\sqrt{3}\pi}{4})$ and obtain the system of linear
	equations as follows:%
	\begin{align*}
	b_{1}+2b_{2} &  =-\frac{1}{4},\\
	a_{1}+2a_{2}+2\sqrt{3}(c_{2}+2c_{3}) &  =-7,\\
	\sqrt{3}(a_{1}+2a_{2}) &  =2(c_{2}+2c_{3}),\\
	b_{1} &  =-b_{2},\\
	-(a_{1}+a_{2}) &  =\sqrt{3}(c_{2}+c_{3}),\\
	\sqrt{3}(a_{1}+a_{2}) &  =c_{2}+c_{3}.
	\end{align*}
	So, we obtain%
	\begin{align*}
	b_{1}  & =\frac{1}{4},\text{ }b_{2}=-\frac{1}{4},\text{ }c_{2}=\frac{7\sqrt
		{3}(3-\sqrt{3})}{12},~c_{2}=\frac{7\sqrt{3}(\sqrt{3}-3)}{12},\\
	a_{1}  & =\frac{14(3-\sqrt{3})}{12},~a_{2}=\frac{14(\sqrt{3}-3)}{12}.
	\end{align*}
	Thus the surface with an isoasymptotic curve passing the two points
	$A_{1}=(1,7,\frac{\sqrt{3}\pi}{4})$ and $A_{2}(\frac{1}{2},0,\frac{\sqrt{3}%
		\pi}{4})$  is uniquely given by%

\begin{align}
\Psi_{2}(\omega,\eta) &  =(\frac{\sin(\omega)}{2}+\frac{7\left(  3-\sqrt
	{3}\right)  }{12}\left(  \eta-\eta^{2}\right)  \cos(\omega)\nonumber\\
&  -\frac{\sin(\omega)}{4}\left(  \eta-\eta^{2}\right)  +\frac{7\left(
	3-\sqrt{3}\right)  }{8}\left(  \eta^{2}-\eta^{3}\right)  \cos(\omega
),\nonumber\\
&  \frac{\cos(\omega)}{2}-\frac{7\left(  3-\sqrt{3}\right)  }{12}\left(
\eta-\eta^{2}\right)  \sin(\omega)\label{examyuz2}\\
&  -\left(  \eta-\eta^{2}\right)  \frac{\cos(\omega)}{4}-\frac{7\left(
	3-\sqrt{3}\right)  }{8}\left(  \eta^{2}-\eta^{3}\right)  \sin(\omega
)+,\nonumber\\
&  \frac{\sqrt{3}s}{2}+\frac{7\left(  \sqrt{3}-1\right)  }{4}\left(  \eta
-\eta^{2}\right)  -\frac{7\left(  \sqrt{3}-1\right)  }{8}\left(  \eta^{2}%
-\eta^{3}\right)  ).\nonumber
\end{align}
	The surface (\ref{examyuz2}) is shown in Figure 4 and \ the surface
	(\ref{examyuz2}) with curve (\ref{xam1egri}) is shown in Figure 5.
\end{example}
\begin{figure}
	[h]
	\begin{center}
		\includegraphics[
		height=1.6in,
		width=3in
		]%
		{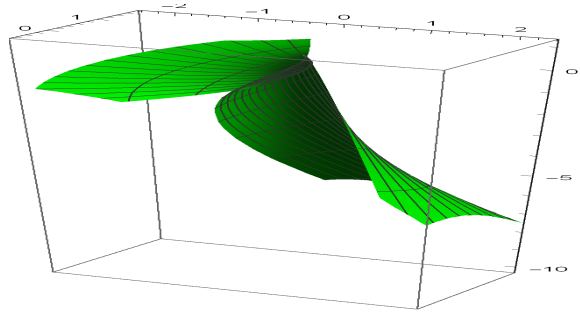}%
	\end{center}
\caption{Graph of surface $ \Psi_2$}
\end{figure}
\begin{figure}
	[h]
	\begin{center}
		\includegraphics[
		height=1.6in,
		width=3in
		]%
		{}%
	\end{center}
\caption{Graph of surface $ \Psi_2$ with curve $ \gamma $}
\end{figure}
\newpage


\begin{thebibliography}{99}
	
	\bibitem{csaffak2009family} Atalay,  G.Ş.  and  Kasap, E. Family of surface with a common null geodesic, International Journal of Physical Sciences 4(8) (2009) 428-433.
	
	\bibitem{atalay2015surfaces}  Atalay, G.Ş.  and Kasap, E. Surfaces family with common null asymptotic, Applied Mathematics and Computation 260 (2015) 135-139.
	\bibitem{bayram2012parametric}Bayram, E., Güler, F., and Kasap, E.  Parametric representation of a surface pencil with a common asymptotic curve. Computer-Aided Design, 44(7),(2012), 637-643.
	
		
\bibitem{bremond1994estimation} Br\'emond R., Jeulin D. , Gateau P. ,Jarrin J. , and  Serpe G. , Estimation of the transport
properties of polymer composites by geodesic propagation, J. Microsc. 176 (1994)

\bibitem{bronstein2017geometric}Bronstein, M. M., Bruna, J., LeCun, Y., Szlam, A., and  Vandergheynst, P. Geometric deep learning: going beyond euclidean data. IEEE Signal Processing Magazine, 34(4), (2017), 18-42.


\bibitem{che2007lines} Che W. , Paul J-C.  and Zhang X. Lines of curvature and umbilical points for implicit surfaces, Computer Aided Geometric Design 24(7) (2007) 395-409.


\bibitem{contopoulos1990asymptotic}Contopoulos, G. Asymptotic curves and escapes in Hamiltonian systems. Astronomy and Astrophysics,(1990), 231, 41-55.


\bibitem{ergun2014surface} Ergün E. , Bayram E. and Kasap, E.  Surface pencil with a common line of curvature in Minkowski 3-space, Acta Mathematica Sinica, English Series 30.12 (2014) 2103-2118.

\bibitem{ergun2015surface} Ergün E. , Bayram E. and Kasap, E.  Surface family with a common natural line of curvature lift, Journal of Science and Arts 15(4) (2015) 321.

\bibitem{efthymiopoulos1999cantori}Efthymiopoulos, C., Contopoulos, G., and Voglis, N. Cantori, islands and asymptotic curves in the stickiness region. In Impact of Modern Dynamics in Astronomy, (1999) (pp. 221-230). Springer, Dordrecht.


\bibitem{guler2019asym} Güler, F.  Şaffak Atalay G., Gülnur E. and Kasap E. An approach for designing a surface pencil through a
given asymptotic curve, Journal of Applied Mathematics and Computation (JAMC), 2019, 3(4), 606-615

\bibitem{haw1985application} Haw R. J. , An application of geodesic curves to sail design, Comput. Gragh. Forum 4 	(1985), 137-139.

\bibitem{kasap2006surfaces} Kasap, E.  and Akyildiz, F.T.  Surfaces with common geodesic in Minkowski 3-space, Applied mathematics and computation 177(1) (2006) 260-270.

\bibitem{kasap2008generalization} Kasap E., Akyildiz, F.T.  and Orbay, K.  A generalization of surfaces family with common spatial geodesic, Applied Mathematics and Computation 201(1-2) (2008) 781-789.


\bibitem{leeinterpolation2020} Lee H.C, Lee J.W, and Won D.  Yoon Interpolation of Surfaces With Geodesics, J. Korean Math. Soc. 57 (2020), No. 4, pp. 957-971 

\bibitem{li2011parametric} Li, C-Y.,  Wang, R-H.  and  Zhu,  C-G Parametric representation of a surface pencil with a common line of curvature, Computer-Aided Design 43(9) (2011) 1110-1117.


\bibitem{li2013generalization} Li, C-Y.,   Wang, R-H.  and Zhu, C-G  A generalization of surface family with common line of curvature, Applied Mathematics and Computation 219(17) (2013) 9500-9507.

\bibitem{li2013designing} Li, C-Y., Wang,  R-H.  and  Zhu, C-G  Designing approximation minimal parametric surfaces with geodesics. Applied mathematical modelling, 37(9) (2013) 6415-6424.

\bibitem{4}  Li, C.-Y.,  Wang, R.-H. and  Zhu, C.-G. Design and $ G^1 $ connection of developable surfaces
through B\'ezier geodesics, Appl. Math. Comput. 218 (2011), no. 7, 3199-3208.

\bibitem{liu2013designing}Liu, Y., and  Wang, G. J.  Designing developable surface pencil through given curve as its common asymptotic curve. Journal of Zhejiang University (Engineering Science),(2013) 47(7), 1246-1252.





\bibitem{5}M. Paluszny, Cubic polynomial patches through geodesics, Comput. Aided Des. 40 (2008), 56-61.
manizing Digital Reality (pp. 125-140). Springer, Singapore.


\bibitem{saad2019minimal} Saad, M. K., Abdel-Baky, R. A., Alharbi, F., and  Aloufi, A. (2019). On Minimal Surfaces with the Same Asymptotic Curve in Euclidean Space. Applied Mathematical Sciences, 13(21), 1021-1031.

\bibitem{6}J. S\'anchez-Reyes and R. Dorado, Constrained design of polynomial surfaces from geodesic curves, Comput. Aided Des. 40 (2008), 49-55.


\bibitem{schling2018designing} Schling, E., Hitrec, D., and  Barthel, R. (2018). Designing grid structures using asymptotic curve networks. In Hu

\bibitem{struik1961lectures} Struik, D.J.   Lectures on classical differential geometry. Courier Corporation (1961).



\bibitem{wang2004parametric} Wang,  G-J., Tang, K.  and Tai, C-L.  Parametric representation of a surface pencil with a common spatial geodesic, Computer-Aided Design 36(5) (2004) 447-459.


\bibitem{yuzbacsi2016construction} Z.K. Yüzbaşı and M. Bektaş, On the construction of a surface family with common geodesic in Galilean space G3, Open Physics 14(1) (2016) 360-363.
\bibitem{yuzbasi2016family} Z.K. Yüzbası, On a family of surfaces with common asymptotic curve in the Galilean space G3, J. Nonlinear Sci. Appl 9 (2016) 518-523.




\end{thebibliography}
\end{document}